\newcommand{\RR}{\mathbb{R}}
\newcommand{\NN}{\mathbb{N}}
\newcommand{\ZZ}{\mathbb{Z}}
\newcommand{\QQ}{\mathbb{Q}}
\newcommand{\CC}{\mathbb{C}}
\newcommand{\PrA}{\mathop{\mathbf{PrA}}\nolimits}
\newcommand{\PA}{\mathop{\mathbf{PA}}\nolimits}
\newcommand{\Th}{\mathop{\mathbf{Th}}\nolimits}
\renewcommand{\epsilon}{\varepsilon}
\renewcommand{\phi}{\varphi}
\newcommand{\sref}[2]{\hyperref[#2]{#1 \ref*{#2}}}
\newcommand{\dref}[2]{\hyperref[#2]{ #1 }}
\newcommand{\Dc}{\mathcal{D}}
\newcommand{\Kc}{\mathcal{K}}
\newcommand{\Lc}{\mathcal{L}}
\newcommand{\eqdef}{\stackrel{\mbox{\tiny\rm def}}{=}}
\newcommand{\ra}{\rightarrow}
\newcommand{\Lra}{\Leftrightarrow}
\newcommand{\lra}{\leftrightarrow}
\spnewtheorem{hyp}{Conjecture}[section]{\bfseries}{\itshape}
\spnewtheorem{ex}{Example}{\bfseries}{\itshape}
\newcommand{\xv}{\overline{x}}
\newcommand{\yv}{\overline{y}}
\newcommand{\av}{\overline{a}}
\newcommand{\bv}{\overline{b}}
\begin{document}
	\author{Alexander Zapryagaev\thanks{The publication was prepared within the framework of the Academic Fund Program at the National Research University Higher School of Economics (HSE) in 2019 (grant No. 19-04-050) and by the Russian Academic Excellence Project "5-100".}}
	\title{Interpretations of Linear Orderings in Presburger Arithmetic}
	\institute{National Research University Higher School of Economics, 6, Usacheva Str., Moscow, 119048, Russian Federation}
	
	\maketitle
	
	\begin{abstract}
		Presburger Arithmetic $\PrA$ is the true theory of natural numbers with addition. We consider linear orderings interpretable in Presburger Arithmetic and establish various necessary and sufficient conditions for interpretability depending on dimension $n$ of interpretation. We note this problem is relevant to the interpretations of Presburger Arithmetic in itself, as well as the characterization of automatic orderings. For $n=2$ we obtain the complete criterion of interpretability.
	\end{abstract}
	
	\section{Introduction}
	
	Presburger Arithmetic, the true theory of natural numbers with addition, is a prime example of a sufficiently well-applied weaker arithmetical theory. As opposed to Peano Arithmetic $\PA$, it is complete, decidable and admits quantifier elimination in an extension of its language.
	
	In this work we consider the interpretations of linear orderings in the standard model $(\NN,+)$ of $\PrA$, that is, Presburger definitions of the ordering relation on natural numbers that, on some Presburger-definable subset of $\NN$ model a given ordering. Presburger arithmetic is weak enough to be unable to encode tuples of numbers by single numbers, so it becomes relevant to study one-dimensional and $n$-dimensiona, $n>1$ interpretations separately. In the work \cite{zp}, co-authored by the current author, it was established that a Hausdorff-type notion of a rank pertaining to an ordering ($VD_*$-rank) gives the simplest necessary condition of interpretability:
	
	\begin{theorem}\label{basordeing}
		All linear orderings $n$-dimensionally interpretable in $(\NN,+)$ have the $VD_*$-rank $n$ or below.
	\end{theorem}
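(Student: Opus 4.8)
The plan is to reduce the statement to a purely geometric fact about semilinear orders and then prove it by induction on the dimension $n$. By the quantifier elimination for $\PrA$ (in the language augmented by the congruence predicates), the domain $D\subseteq\NN^n$ of the interpretation, its equivalence relation, and the interpreted order are all definable and hence semilinear. Since $(\NN,+)$ has definable Skolem functions, one may pick the lexicographically least element of each class, so without loss of generality I replace each equivalence class by its least representative and obtain $\Lc\cong(D,\prec)$ with $D\subseteq\NN^n$ semilinear and $\prec$ a semilinear linear order on $D$. The theorem then becomes the assertion that \emph{every semilinear linear order on a semilinear subset of $\NN^n$ has $VD_*$-rank at most $n$.} Here I read the $VD_*$-rank as the Hausdorff-style rank obtained by iterating the finite condensation $x\sim y\Llra$ the $\prec$-interval between $x$ and $y$ is finite; this condensation is itself $\PrA$-definable, since finiteness of a semilinear set is equivalent to its boundedness.

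I would argue by induction on $n$. For the base case $n=1$ the set $D$ is eventually periodic and $\prec$ is given, via quantifier elimination, by a Boolean combination of finitely many affine inequalities and congruences comparing $x$ and $y$. The finiteness of this family of comparison types forces $D$ to split into only finitely many condensation classes, so $(D,\prec)/{\sim}$ is finite and the $VD_*$-rank is at most $1$. Concretely, one checks that every semilinear order on a subset of $\NN$ is a finite sum of orders of type $\omega$, $\omega^*$, $\zeta$, or finite, each of which is a single condensation class.

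For the inductive step the plan is to show that one application of the finite condensation drops the dimension: the quotient $(D,\prec)/{\sim}$ is again a semilinear order, but is interpretable in $\PrA$ in dimension at most $n-1$. Granting this, the inductive hypothesis shows that the $VD_*$-rank of $(D,\prec)/{\sim}$ is at most $n-1$, and since the $VD_*$-rank of an order is at most one more than that of its finite condensation, I conclude that the $VD_*$-rank of $(D,\prec)$ is at most $n$. To obtain the dimension drop I would use a cell decomposition of $D$ into finitely many translated cones intersected with congruence classes. On each full-dimensional cell the semilinear definition of $\prec$ is eventually insensitive, along a fixed rational direction, to bounded perturbations, so moving along an arithmetic progression in that direction stays inside a single condensation class; each infinite condensation class therefore contains a whole progression, and quotienting out these definable directions yields a semilinear quotient carried by a set of dimension at most $n-1$.

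The main obstacle is exactly this dimension-drop claim. The difficulty is that $\prec$ need not be aligned with the coordinate axes: it may be twisted, as in a lexicographic order taken with respect to linear functionals $\ell_1,\ldots,\ell_k$ rather than the coordinates themselves, so the discrete direction to be quotiented out must be extracted from the finitely many affine pieces of the formula defining $\prec$. Making this extraction uniform across cells, and handling the finitely many lower-dimensional and finite condensation classes so that they do not raise the dimension of the quotient, is the technical heart of the argument; I expect the bookkeeping here, rather than any single conceptual step, to be where the real work lies.
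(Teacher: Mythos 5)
Your overall strategy is genuinely different from the paper's. The paper argues by contradiction: if $\mathrm{rk}(L,\prec)>m$, it extracts an infinite monotone chain $\av_0\prec\av_1\prec\ldots$ of pairwise $\simeq_m$-inequivalent points, observes that each interval $L_i=\{\bv\mid \av_i\prec\bv\prec\av_{i+1}\}$ has rank $>m-1$ and hence (by the induction hypothesis in contrapositive form) Presburger dimension $m$, and derives a contradiction from having infinitely many pairwise disjoint $m$-dimensional sets inside a single definable parametric family. You instead propose to prove the stronger structural fact that the finite condensation of an $n$-dimensionally interpretable order is $(n-1)$-dimensionally interpretable, and to deduce the rank bound from $\mathrm{rk}(L)\le 1+\mathrm{rk}(cL)$. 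That dimension-drop statement is true --- it is exactly the theorem the paper proves separately in its section on condensed orderings --- and your route would buy a stronger intermediate result; but the paper establishes it only later, with noticeably more work than the rank bound itself requires.

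The gap is precisely where you locate it, and your sketch does not close it. Two points. First, showing that each infinite condensation class absorbs an arithmetic progression in some definable direction does not by itself show that the set of class representatives has dimension at most $n-1$: a priori each class could contain a whole progression while the representatives still form a full-dimensional semilinear set. The paper excludes this by a counting argument: following the definable successor along the galaxies produces, after finitely many piecewise-linear steps, more pairwise disjoint $n$-dimensional subsets of $\NN^n$ than a semilinear decomposition can accommodate. Second, you refer to ``the finitely many lower-dimensional and finite condensation classes,'' but there may be \emph{infinitely many} finite condensation classes interleaved with the infinite ones, and their representatives could a priori be the full-dimensional part of the quotient; the paper needs a separate argument here (a definable piecewise-linear injection from finite-galaxy representatives into infinite-galaxy representatives, which cannot raise dimension) to rule this case out. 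Without both of these, your inductive step is not established, so as written the proposal proves the theorem only modulo a lemma that is itself harder than the paper's direct argument.
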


	This immediately implies all interpretable orderings are \textit{scattered}.
	
	However, for $n\ge 2$, even from cardinality arguments, this condition is far from sufficient. This work attempts to tune the classification and obtain a more exact description of interpretable orderings. It turns out that for dimension $2$ all the interpretable orderings can be obtained by limiting the lexicographic ordering on $\NN^k$ for some $k$ onto some $\PrA$-definable set. This is obtained by considering the interpretation of the ordering produced from the initial one by identifying the points at the finite distance. It is conjectured that the same holds for all dimensions $k\ge 1$, thus giving the complete description.
	
	\section{Preliminaries}
	
	\subsection{Presburger Arithmetic}
	
	We consider the language $\Lc^-\eqdef\{=,+\}$.
	
	\begin{definition}
		{\em Presburger Arithmetic} $(\PrA)$ is the elementary theory of the model $(\NN,+)$ of natural numbers with addition.
	\end{definition}
	
	It is easy to define the relations $<$ and $\le$, modulo comparison $\equiv_n$ for all $n\ge 1$ in the model $(\NN,+)$. By fixing some definitions of those as additional axioms, we can extend the signature to $\Lc\eqdef\left\{=,+,<,\{\equiv_n\}_{n\in\NN}\right\}$ and freely switch between $\Lc$-formulas and equivalent $\Lc^{-}$-formulas. A sample definition could be introduced as follows:
	
	\begin{itemize}
		\item $t\neq s\lra\neg(t=s),\:t\le s\lra\exists u(t+u=s)$;		
		\item $t<s\lra t\le s\wedge t\neq s,\:t>s \lra s<t,\:t\ge s\lra s\le t$;		
		\item $\underline{n}t\eqdef \underbrace{t+\ldots+t}_{\text{$n$ times}}$ (formal recursive definition is $((\underline{n-1})t+t)$ with $\underline{0}t=0$ for all $n\in\NN$);
		\item $\underline{n}\lra\underline{n}1$ for all $n\in\NN$;		
		\item $t\equiv_n s\lra\exists u\:(t=\underline{n}u+s\vee s=\underline{n}u+t)$ for all $n\in\NN$.
	\end{itemize}
	
	This extension gives us quantifier elimination in Presburger arithmetic \cite{presburger}:
	
	\begin{theorem}[Presburger, 1929]
		Each formula in the language $\Lc$ is $\PrA$-provably equivalent to a quantifier-free $\Lc$-formula.
	\end{theorem}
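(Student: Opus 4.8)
The plan is to establish quantifier elimination by the classical method of eliminating a single existential quantifier. First I would reduce to the core case: since any formula can be brought to prenex form and the quantifiers stripped from the inside out, and since $\exists x$ distributes over disjunctions, it suffices to show that for a conjunction $\phi(x,\bar y)$ of literals (atomic $\Lc$-formulas and their negations) the formula $\exists x\, \phi(x,\bar y)$ is $\PrA$-equivalent to a quantifier-free formula. Negations can be cleared at the outset: $\neg(t=s)$ is $t<s\vee s<t$, $\neg(t<s)$ is $s<t\vee s=t$, and $\neg(t\equiv_n s)$ is the disjunction $\bigvee_{k=1}^{n-1} t\equiv_n s+\underline{k}$; after redistributing over the resulting disjunctions we may assume every literal is a positive atomic formula.

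Next I would normalize each literal with respect to $x$. Collecting the occurrences of $x$ on one side, every literal takes one of the shapes $a x\sim t$ with $\sim\,\in\{=,<,>\}$ or $a x\equiv_n t$, where $a\ge 0$ is an integer coefficient and $t$ is a term not containing $x$. Letting $a^{*}$ be the least common multiple of the nonzero coefficients $a$, I would multiply each literal through by the appropriate factor so that $x$ always occurs as $a^{*}x$; multiplying a congruence $a x\equiv_n t$ by $c$ turns it into $a^{*}x\equiv_{cn} ct$, so the moduli must be tracked but stay finite. Introducing a fresh variable standing for $z=a^{*}x$ and recording the constraint $z\equiv_{a^{*}}0$, the problem becomes the elimination of $\exists z$ from a conjunction in which $z$ occurs with coefficient exactly one: a conjunction of lower bounds $s_i<z$, upper bounds $z<t_j$, congruences $z\equiv_{m_k}r_k$, and the implicit bound $0\le z$ forced by the domain $\NN$.

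The heart of the argument is this single-variable elimination. If an equality $z=u$ is present, I would simply substitute $u$ for $z$ throughout and finish. Otherwise let $M$ be the least common multiple of all the moduli $m_k$ together with $a^{*}$; the conjunction of congruence conditions on $z$ is invariant under $z\mapsto z+M$, hence periodic. Consequently a witness $z$ exists in the admissible interval cut out by the bounds if and only if one can be found just above one of the lower bounds, so $\exists z\, \phi$ becomes the finite disjunction, over each lower bound $s_i$ (and the constant bound $0$) and each offset $d\in\{1\sco M\}$, of the quantifier-free conjunction obtained by substituting the term $s_i+\underline{d}$ for $z$. Each disjunct then consists solely of comparisons and congruences between $x$-free terms, so the quantifier is gone. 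Undoing the substitution $z=a^{*}x$ recovers an equivalent quantifier-free $\Lc$-formula in the original variables, and induction on the number of quantifiers completes the proof.

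The step I expect to be the main obstacle is precisely the periodicity argument in the single-variable case, together with the bookkeeping of moduli that precedes it: the reduction hinges on the congruence predicates being closed under the scaling $a x\equiv_n t\mapsto a^{*}x\equiv_{cn} ct$ and on the conjunction of congruences being genuinely periodic with a computable period $M$. This is exactly where the enrichment of the language by all $\equiv_n$ is indispensable — without it, a condition such as ``$x$ is even'' arising from $\exists y\,(x=\underline{2}y)$ would admit no quantifier-free equivalent, and elimination would fail.
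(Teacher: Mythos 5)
Your argument is the standard Presburger quantifier-elimination procedure (reduce to one existential over a conjunction of positive literals, normalize the coefficient of the bound variable, then use periodicity modulo the lcm of the moduli to replace $\exists z$ by a finite disjunction of substitution instances near the lower bounds), and it is correct; the enrichment by the $\equiv_n$ predicates is indeed exactly what makes the final step go through. The paper itself does not reproduce a proof but only cites Presburger (1929), whose argument is essentially the one you give, so there is nothing to correct.
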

	
	Hence, $\PrA$ is complete and decidable, in a sharp contrast with Peano Arithmetic $\PA$ that involves multiplication. Completeness immediately follows from the fact $\PrA$ is the true theory of natural numbers. The second requires manual checking that Presburger decides atomic formulae. 
	
	We note it is ambiguous whether one should choose $\ZZ$ or $\NN$ as the domain of the model. As \cite[p. 2]{haase} points out, theory initially offered by Tarski to Presburger was the integer-based $\Th(\ZZ,=,+,0,1)$, and the method of quantifier elimination was easily extendable to $\Th(\ZZ,=,+,0,1,<)$, which this is equivalent to $\Th(\NN,=,+)$ without order. Indeed, order is definable in naturals as follows:
	
	\begin{center}
		$x<y\equiv\exists z\:y=x+z+1.$
	\end{center}
	
	On the other hand, by defining integers as differences of naturals, we can implement operations on $\ZZ$.
	
	Similarly to $\PA$, $\PrA$ allows non-standard models. Unlike $\PA$, however, where it is impossible to produce an explicit non-standard model by defining some recursive addition and multiplication (Tennenbaum's Theorem), examples of non-standard models of $\PA$ can be given easily.
	
	\begin{example}
		Consider pairs $(a,b)$, where $a\in\QQ_{\ge 0}$, $b\in\ZZ$ except in case when $a=0$, when we define $b\in\NN$. If we define addition on these pairs by component, it can be checked that the structure is a model of $\PrA$. It is non-standard, as its order type is $\NN+\ZZ\cdot\QQ$.
	\end{example}
	
	However, the same construction that is normally used in the studies of $\PA$ allows us to describe the order types of non-standard models of $\PrA$.
	
	\begin{theorem}[On Non-Standard Models]
		Any nonstandard model $\mathfrak{A}\models\PrA$ has the order type $\NN+\ZZ\cdot A$, where $\langle A,<_A\rangle$ is some dense linear order (DLO) without endpoints.
	\end{theorem}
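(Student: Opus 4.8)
The plan is to reproduce, in the additive setting, the classical analysis of the order types of nonstandard models of arithmetic. The crucial observation is that every structural fact I shall need is expressible by a first-order $\Lc$-sentence true in $(\NN,+)$; since $\PrA$ is the complete true theory, each such sentence also holds in $\mathfrak A$, so all the ``arithmetic'' I rely on transfers automatically. First I would isolate the \emph{standard part} $N=\{\underline{n} : n\in\NN\}$. The sentences $\underline{m}\neq\underline{m'}$ (for $m\neq m'$) together with $\forall x\,(x=\underline{0}\vee\exists y\,x=y+\underline{1})$ guarantee that $N$ is an initial segment of $\mathfrak A$ order-isomorphic to $\NN$, and nonstandardness supplies an element exceeding every $\underline{n}$.

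Next I would introduce the finite-distance relation $a\approx b\ardef\exists n\in\NN\,(a\le b+\underline{n}\,\wedge\,b\le a+\underline{n})$, whose classes (``galaxies'') partition the domain. Using that $\PrA$ proves the existence of predecessors, I would check that $N$ is exactly the class of $\underline{0}$ and contributes the summand $\NN$, while every other class is closed under the definable successor $x\mapsto x+\underline{1}$ and its predecessor, and contains neither a least nor a greatest element, hence has order type $\ZZ$. A short convexity argument---if $a<b$ lie in different galaxies then $b-a$ exceeds every $\underline{n}$, so every element of $a$'s galaxy precedes every element of $b$'s---shows the galaxies are linearly and convexly ordered; let $\langle A,<_A\rangle$ be the induced linear order on the nonstandard galaxies. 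Since $N$ precedes all nonstandard elements, the order type of $\mathfrak A$ is $\NN+\ZZ\cdot A$.

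It then remains to prove that $A$ is dense and has no endpoints. All three facts reduce to arithmetic available in $\PrA$, namely halving $\forall x\,\exists y\,(x=\underline{2}y\vee x=\underline{2}y+\underline{1})$ together with doubling $x\mapsto x+x$. For the absence of a greatest galaxy: if $a$ is nonstandard then $a+a$ lies strictly above $a$'s galaxy, since $(a+a)-a=a$ exceeds every $\underline{n}$. For the absence of a least nonstandard galaxy: halving a nonstandard $a$ yields a $c$ with $a=\underline{2}c$ or $a=\underline{2}c+\underline{1}$ that is again nonstandard and satisfies that $a-c$ exceeds every $\underline{n}$, so $c$'s galaxy lies strictly below. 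For density: given $G_a<_A G_b$, the element $c$ obtained by halving $a+b$ satisfies that both $c-a$ and $b-c$ exceed every $\underline{n}$, placing its galaxy strictly between. Assembling these gives the claimed order type; note I do not and cannot claim $A$ is countable, only that it is a DLO without endpoints.

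The main obstacle is not any single step but the disciplined bookkeeping of exactly which statements are genuinely $\PrA$-provable (equivalently, true in $(\NN,+)$) so that they transfer to $\mathfrak A$. The density argument in particular hinges on the fact that halving sends an infinite element to an infinite element, and this must be derived from the first-order halving and comparison sentences rather than from any external notion of magnitude. Verifying that the galaxy relation is genuinely convex---so that the quotient really is a linear order and the $\ZZ$-blocks stack without interleaving---is the other place where care is required.
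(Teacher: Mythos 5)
Your proposal is correct and follows essentially the same route as the paper: partition the nonstandard part into galaxies of order type $\ZZ$, observe that the induced order on galaxies is linear, and witness density and the absence of endpoints via doubling and (floor) division, differing only in that you use midpoints and halving where the paper uses general $\left\lfloor\frac{x}{n}\right\rfloor$ and a weighted average. You also spell out several routine verifications (the $\ZZ$-structure of each galaxy, convexity, and that halving preserves infiniteness) that the paper leaves implicit, which is harmless.
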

	\begin{proof}
		Each non-standard element of $\mathfrak{A}$ belongs to a fragment fragment isomorphic to $\ZZ$. We call such fragments galaxies: $[a]\eqdef\{a+n\mid n\in\NN\}\cup\{a-n\mid n\in\NN\}$. We introduce the ordering on galaxies as one induced from the original ordering: $[a]<[b]$ means $a<b$ and $[a]\neq[b]$. This ordering is linear, and we may guarantee its denseness and absence of endpoints by giving an example of a point to the left, right, and between any two galaxies $[a]<[b]$. This is done using division $\left\lfloor\frac{x}{n}\right\rfloor:$
		
		\begin{center}
			$\left[\left\lfloor\frac{a}{n}\right\rfloor\right]<[a]<\left[\left\lfloor\frac{a+(\underline{n-1})\cdot b}{n}\right\rfloor\right]<[b]<[\underline{2}b].$
		\end{center}
	\end{proof}
	
	In particular, any countable model of $\PrA$ has the order type of either $\NN$ or $\NN+\ZZ\cdot\QQ.$
	
	There is a simple characterization of Presburger-definable sets, given by combining two classical results.
	
	\begin{definition}
		For vectors $\overline{c},\overline{p_1},\ldots,\overline{p_n}\in\ZZ^m$ we call the set $\{\overline{c}+\sum k_i\overline{p_i}\mid k_i\in\NN\}$ a \textbf{lattice} (or a linear set) generated by $\{p_i\}$. If $\{p_i\}$ are linearly independent, we call the set a \textbf{fundamental lattice}.
	\end{definition}
	
	According to \cite{ginsburg}, Presburger-definable subsets of $\NN^m$ are exactly the unions of a finite number of (possibly intersecting, possibly non-fundamental) lattices (also called \emph{semilinear sets} in literature). However, Ito in \cite{ir}has shown that any set in $\NN^m$ which is a union of a finite number of (possibly intersecting, possibly non-fundamental) lattices (semilinear sets) can be expressed as a union of a finite number of disjoint fundamental lattices. Hence, $\PrA$-definability is equivalent to being a finite disjoint union of fundamental lattices (not necessarily of dimension $m$).
	
	It is now obvious to take the maximal dimension of the sets appearing in such a decomposition as a Presburger-transformation invariant of a infinite definable set; the possibility is given by the following statements:
	
	\begin{theorem}
		Every infinite definable subset $A\subseteq\NN^m,\:m\ge 1$ has a definable bijection onto $\NN^n$ for some $n,$ where $1\le n\le m.$ Furthermore, there is no definable bijection between $\NN^k$ and $\NN^l$, whenever $k\neq l$. Hence, this $n$ is unique.
	\end{theorem}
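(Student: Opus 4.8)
The plan is to prove the two halves separately: first the existence of a definable bijection of $A$ onto some $\NN^n$ with $1\le n\le m$, and then the invariance of the exponent, which also yields its uniqueness.

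For existence I would begin from the structural description recalled just above: $A$ is a finite disjoint union of fundamental lattices $L_1\sqcup\cdots\sqcup L_r$, where $L_j=\{\overline{c_j}+\sum_{i=1}^{d_j}k_i\overline{p_{j,i}}\mid k_i\in\NN\}$ and the generators $\overline{p_{j,i}}$ are linearly independent. Each $L_j$ is in definable bijection with $\NN^{d_j}$, since the affine map $(k_1\sco k_{d_j})\mapsto\overline{c_j}+\sum_i k_i\overline{p_{j,i}}$ has a quantifier-free graph and is injective by linear independence. As $A\subseteq\NN^m$ forces $d_j\le m$ and infinitude of $A$ forces some $d_j\ge 1$, the number $n\eqdef\max_j d_j$ satisfies $1\le n\le m$. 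It then remains only to glue the pieces, i.e.\ to exhibit a single definable bijection $\NN^{d_1}\sqcup\cdots\sqcup\NN^{d_r}\cong\NN^n$.

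The gluing reduces to a single \emph{absorption lemma}: for every $d$ with $0\le d\le n$ there is a definable bijection $\NN^n\cong\NN^n\sqcup\NN^d$. For $d=n$ this is immediate by splitting one coordinate according to parity, via $\NN\cong 2\NN\sqcup(2\NN+1)$ with both halves definably bijective with $\NN$. For $d<n$ I would factor $\NN^n=\NN^d\times\NN^{n-d}$ and use the one-point absorption $\NN^{n-d}\cong\NN^{n-d}\sqcup\{\ast\}$, valid because $n-d\ge 1$ (shift one coordinate axis up by one and act as the identity elsewhere); multiplying this bijection by $\ids_{\NN^d}$ gives $\NN^n\cong\NN^n\sqcup\NN^d$, and a product of definable bijections is definable. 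Applying the lemma repeatedly absorbs $L_2\sco L_r$ one at a time into a fixed top-dimensional piece $L_1\cong\NN^n$; after finitely many steps this produces the desired definable bijection $A\cong\NN^n$, the disjointness of the $L_j$ guaranteeing that the assembled map is single-valued and definable.

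For the invariance statement I would argue by a counting estimate, and this is the step I expect to be the main obstacle. The key input is that every $\PrA$-definable function $f\colon\NN^k\to\NN^l$ grows at most linearly, $|f(x)|\le C(|x|+1)$ for some constant $C$ and the max-norm $|\cdot|$: its graph is a finite union of fundamental lattices contained in $\NN^{k+l}$, whose generators are therefore nonnegative, and single-valuedness forbids any generator with vanishing $x$-block but nonvanishing $y$-block, from which the bound can be read off. Now let $f\colon\NN^k\to\NN^l$ be a definable bijection and set $\phi_S(N)\eqdef|\{x\in S\mid |x|\le N\}|$, so that $\phi_{\NN^k}(N)=(N+1)^k$. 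Since $f$ is injective and maps $\{|x|\le N\}$ into $\{|y|\le CN+C\}$, we obtain $(N+1)^k\le(CN+C+1)^l$ for all large $N$, hence $k\le l$; the same estimate applied to the definable bijection $f^{-1}$ gives $l\le k$, so $k=l$. Finally, uniqueness of $n$ in the first part follows, since two bijections $A\cong\NN^n$ and $A\cong\NN^{n'}$ compose to a definable bijection $\NN^n\cong\NN^{n'}$ and thus force $n=n'$. The points I would treat with full care, rather than by the sketch above, are the exact derivation of the linear growth bound from the generator description and the verification that all the gluing and absorption maps are genuinely $\PrA$-definable bijections.
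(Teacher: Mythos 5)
Your argument is correct in substance, but it is worth noting that the paper does not prove this theorem at all: it is imported wholesale from Cluckers \cite{cluckers} (as restated in \cite{zp}), where it is obtained via a cell-decomposition/dimension theory for Presburger sets. So your proposal is a genuinely different, self-contained and more elementary route. The existence half --- decomposing $A$ into disjoint fundamental lattices by Ito's theorem, parametrizing each lattice by $\NN^{d_j}$ through its (linearly independent) generators, and then gluing via the absorption bijections $\NN^n\cong\NN^n\sqcup\NN^d$ --- is sound; the only imprecision is in the one-point absorption $\NN^{n-d}\cong\NN^{n-d}\sqcup\{\ast\}$, where ``shift one coordinate up by one'' must be applied only on the coordinate axis $\{(t,0\sco 0)\}$ with the identity elsewhere (a global shift of one coordinate would miss the points with that coordinate zero and nonzero remaining coordinates); as stated it is ambiguous but the intended map works. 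The invariance half is the more valuable contribution: the linear growth bound for definable functions, read off from the facts that generators of a lattice contained in $\NN^{k+l}$ are componentwise nonnegative and that single-valuedness excludes generators with zero $x$-block, combined with the box-counting estimate $(N+1)^k\le(CN+C+1)^l$ applied to both $f$ and $f^{-1}$, correctly forces $k=l$. This replaces Cluckers' dimension theory with an asymptotic counting argument, at the price of depending on Ito's disjoint-decomposition theorem (which the paper in any case already invokes); what it buys is a proof entirely internal to the toolkit the paper has set up. The two points you flag yourself --- the exact derivation of the growth bound and the definability of the assembled gluing maps --- do go through as sketched.
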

	
	This was proven by \cite{cluckers} and restated in \cite{zp}. This claim allows one to define \emph{Presburger dimension} $n$ of a infinite set $A\subseteq\NN^m$ according to the existence of a definable bijection between $A$ and $N^n$. It can be shown that this definition is equivalent to the one above.
	
	Additionally, one desires to give a description of Presburger-definable functions. It is as follows:
	
	\begin{definition}
		A function $f\colon A\ra\NN$ where $A$ is definable is called \emph{piecewise polynomial} of degree $\le m$ if there is a decomposition $A=C_1\sqcup\ldots\sqcup C_k$ into fundamental lattices that the restriction of $f$ to each $C_i$ is a rational polynomial of degree $\le m$.
	\end{definition}
	
	All $\PrA$-definable functions are exactly piecewise linear. In this work, the term `piecewise' is used exclusively in this meaning.
	
	\subsection{Interpretations}
	
	We define multi-dimensional first-order non-parametric interpretations, following \cite{tarskimostowski}.
	
	\begin{definition}
		An \textbf{$m$-dimensional interpretation} $\iota$ of some first-order language $\Kc$  in a model $\mathfrak{A}$ consists of first-order formulas of language of $\mathfrak{A}$:
		
		\begin{enumerate}
			\item $D_{\iota}(\overline{y})$ defining the set $\mathbf{D}_{\iota}\subseteq \mathfrak{A}^m$ (domain of interpreted model);			
			\item $P_{\iota}(\overline{x}_1,\ldots,\overline{x}_n)$, for predicate symbols $P(x_1,\ldots,x_n)$ of $\Kc$ including equality;			
			\item $f_\iota(\overline{x}_1,\ldots,\overline{x}_n,\overline{y})$, for functional symbols $f(x_1,\ldots,x_n)$ of $\Kc$.
		\end{enumerate}
	\end{definition}
	
	Here all vectors of variables $\overline{x}$ are of length $m$, and $f_\iota$'s should define graphs of some functions (modulo interpretation of equality).
	
	Naturally, $\iota$ and $\mathfrak{A}$ give a model $\mathfrak{B}$ of the language $\Kc$ on the domain $\mathbf{D}_{\iota}/{\sim_{\iota}}$, where equivalence relation $\sim_{\iota}$ is given by $=_{\iota}(\overline{x}_1,\overline{x}_2)$. We will call $\mathfrak{B}$ the \textbf{internal model}.
	
	If $\mathfrak{B}\models \mathbf{T}$, then $\iota$ is an \emph{interpretation of the theory} $\mathbf{T}$ in $\mathfrak{A}$.
	
	An interpretation is a very natural concept, appearing in mathematics when, for example, Euclidean geometry is interpreted in the theory of real numbers $\RR$ (two-dimensionally, by defining points as pairs of real numbers) in analytic geometry, or the field $\CC$ of complex numbers is two-dimensionally interpreted in $\RR$ by defining $a+bi\lra(a,b)$ and declaring addition and multiplication. We note that in $(\NN,+)$ itself, the field $(\ZZ,+)$ can be interpreted. This is achieved by mapping the negative numbers to odd, positive to even and $0$ to $0$ and defining the addition case-by-case (through non-negative subtraction, which is definable).
	
	We will be interested in interpretations of theories in the standard model of Presburger Arithmetic, that is, $(\mathbb{N},+)$.
	
	\begin{definition}
		An interpretation is called \emph{non-relative} if the domain is trivial, i.e.
		
		\begin{center}
			$\mathfrak{A}\models \forall \overline{y}\; D_\iota(\overline{y}).$
		\end{center}
		
		It is said that an interpretation \emph{has absolute equality} if the symbol $=\in\Kc$ is interpreted in the internal model as the coincidence of two $m$-tuples.
	\end{definition}
	
	We remind that the internal model of a interpretation $\iota$ is a model with the domain $\mathbf{D}_{\iota}/\sim_{\iota}$.
	
	\begin{itemize}
		\item $m_1$-dimensional interpretation $\iota_1$ and $m_2$-dimensional interpretation $\iota_2$ are \emph{isomorphic}, if there is an isomorphism $f$ between the corresponding internal models;
		\item If $f$ can be expressed by a $(m_1+m_2)$-ary formula $F$ we call this isomorphism \emph{definable}.
	\end{itemize}
	
	For the sake of notational simplicity, we will denote the ordering established on the internal model of some linear ordering $L$ interpreted in $(\NN,+)$ as $<_*$, to avoid confusion with the external ordering $<$ in the naturals.
	
	\section{Rank Condition on Linear Orderings}
	
	The motivation to investigate the interpretability of linear orderings in $(\NN,+)$ derives from the following \emph{Visser's conjecture}: $\PrA$ couldn't  be interpreted in any of its finite subtheories (*). We note that in the case of sufficiently rich theories that can formalize syntax, this is a consequence of \emph{reflexivity}, which is ability to prove the consistency of all its finitely axiomatizable subtheories. Hence, the property (*) can be considered a somewhat trace of reflexivity for weaker theories.
	
	To prove (*), it suffices to show that all interpretations of $\PrA$ in $(\mathbb{N},+)$ are definably isomorphic to the trivial one (see \cite[p. 363, Theorem 5.3]{zp}). In order to show this, there are tow steps: show that the isomorphism exists and then show that it is $\PrA$-definable. Proving existence amounts to explaining why the order type of the internal model's order cannot be isomorphic to $\NN+\ZZ\cdot\QQ$.
	
	Let us call an order \emph{scattered} iff it does not have a dense infinite suborder.
	
	Once again, let us call the equivalence class of a point in a linear ordering under the equivalence relation `there is a finite number of points between two given'. We define \emph{condensation} as a function that maps a linear ordering to the induced ordering on its galaxies (fusing the points on finite distance). Now we can introduce a rank of a linear ordering as the minimal number of iterated condensations required to reach a finite ordering. More formally, we introduce the following definition:
	
	\begin{definition}
		Let $(L,<)$ be a linear ordering. Define the family of equivalence relations $\simeq_{\alpha}$ for ordinals $\alpha\in\mathbf{Ord}$ by transfinite recursion ($\lambda$ is a limit ordinal):
		
		\begin{itemize}
			\item $\simeq_0$ is equality;
			\item $a\simeq_{\alpha+1}b\Lra|\{c\in L\mid (a<c<b)\mbox{ or }(b<c<a)\}/{\simeq_{\alpha}}|<\aleph_0$;
			\item $\simeq_{\lambda}=\bigcup\limits_{\beta<\lambda}\simeq_{\alpha}$.
		\end{itemize}
		
		Now we define the $VD_*$-\emph{rank} $\mathrm{rk}(L,<)\in \mathbf{Ord}\cup \{\infty\}$ of the order $(L,<)$ to be the least $\alpha$ such that $L/{\simeq_{\alpha}}$ is finite and $\infty$ otherwise. (By definition $\alpha<\infty$ for all $\alpha\in\mathbf{Ord}$.)
	\end{definition}
	
	Note that the orders $L$ with $\mathrm{rk}<\infty$ are exactly the scattered ones.
	
	In \cite{zp}, the following was shown:
	
	\begin{theorem}[J.~Zoethout, A.~Zapryagaev, F.~Pakhomov, 2016]
		All linear orders $(m\ge 1)$ $m$-dimensionally interpretable in $(\NN,+)$ have $VD_*$-rank $m$ or below.
	\end{theorem}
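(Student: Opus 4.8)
The plan is to prove the statement by induction on the Presburger dimension $n=\dim A$ of the definable domain $A\subseteq\NN^m$ that carries the interpreted order $<_*$, establishing the sharper claim that \emph{any} definable linear order on a definable set of dimension $n$ has $VD_*$-rank at most $n$; since $\dim A\le m$ this yields the theorem. The base case $n=0$ is immediate, as $A$ is then finite and $\mathrm{rk}(A,<_*)=0$. For the inductive step the central object is the first condensation, and I first check that $\simeq_1$ is itself definable. This rests on uniform finiteness of semilinear families: there is a constant $N$, depending only on the formula defining $<_*$, such that for all $a,b$ the interval $\{c:a<_*c<_*b\}$ either has size $\le N$ or is infinite. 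Hence ``$a\simeq_1 b$'' is equivalent to the Presburger condition that this interval together with its mirror image has at most $N$ elements. Because Presburger arithmetic has definable well-orderings (e.g. the lexicographic order on $\NN^m$), I may pick the lex-least element of each $\simeq_1$-class and obtain a definable set of representatives $R\subseteq A$; as $\simeq_1$-classes are convex, $<_*$ restricted to $R$ realizes exactly the quotient order, so $(R,<_*)\cong A/{\simeq_1}$.

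Next I analyze the dimensions of the galaxies. An infinite $\simeq_1$-class is a convex set all of whose proper subintervals are finite, hence of order type $\omega$, $\omega^*$ or $\ZZ$; in each case the internal rank function gives a definable bijection with $\NN$ (splitting a $\ZZ$-type class at a fixed point), so every infinite galaxy has Presburger dimension exactly $1$ and every finite galaxy dimension $0$. Applying additivity of Presburger dimension to the definable condensation map $A\to R$, the union of the infinite galaxies has dimension $\dim R_{\inf}+1$ and lies inside $A$, whence $\dim R_{\inf}\le n-1$, where $R_{\inf}\subseteq R$ consists of the representatives of infinite galaxies. Thus the only way the dimension could fail to drop is if the representatives $R_{\mathrm{fin}}$ of the finite galaxies were full-dimensional, $\dim R_{\mathrm{fin}}=n$.

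To exclude this I use what I expect to be the main obstacle, a No-Dense Lemma: \emph{no infinite definable linear order has all of its proper open intervals infinite}. Granting it, suppose $\dim R_{\mathrm{fin}}=n$. Since $\dim R_{\inf}\le n-1$, the definable partition of $R$ into the segments lying strictly between consecutive infinite-galaxy representatives must, by dimension additivity, contain an infinite segment $I$, which is a (parameter-)definable suborder. Any two distinct finite-galaxy representatives with only finitely many galaxies between them would, after unfusing into their finite galaxies, have only finitely many points of $A$ between them, forcing them into a single $\simeq_1$-class — a contradiction. Hence any two distinct points of $I$ have infinitely many points between them, so $I$ is an infinite definable order with all intervals infinite, contradicting the No-Dense Lemma. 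Therefore $\dim R\le n-1$.

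Finally, the inductive hypothesis applies to the definable order $(R,<_*)$ of dimension $\le n-1$, giving $\mathrm{rk}(R,<_*)\le n-1$; and since condensation of an infinite order lowers its $VD_*$-rank by exactly one, $\mathrm{rk}(A,<_*)=\mathrm{rk}(R,<_*)+1\le n\le m$, completing the induction. The decisive difficulty is the No-Dense Lemma, which I would prove by quantifier elimination: put $<_*$ into a normal form of finitely many linear comparisons and congruences modulo a fixed $N$, and show that on each residue class the order is eventually governed by a single linear form, so that points with consecutive values of that form are separated by only finitely many elements — ruling out density.
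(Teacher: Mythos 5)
Your strategy is genuinely different from the paper's: you induct on the dimension of the domain and try to show that the condensation lives on a strictly lower-dimensional set, whereas the paper assumes $\mathrm{rk}>m$, extracts infinitely many pairwise disjoint definable intervals $L_i$ each of rank $>m-1$, forces $\dim L_i=m$ by the inductive hypothesis, and concludes by the impossibility of infinitely many pairwise disjoint full-dimensional sets in a single definable family in $\NN^m$ (a uniform density/counting argument). The difficulty with your route is that it concentrates all of the hard content into the ``No-Dense Lemma,'' and the proof you sketch for it does not work. The claim that after quantifier elimination the order is ``eventually governed by a single linear form on each residue class, so that points with consecutive values of that form are separated by only finitely many elements'' is already false for the lexicographic order on $\NN^2$ (governed by the first coordinate, but with infinitely many points between consecutive values of it) and is not a normal form that definable orders are known to admit; establishing such a structure theorem would be at least as hard as the theorem you are proving. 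Without a genuine proof of the No-Dense Lemma the induction does not close.

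Two further points. Your assertion that every infinite galaxy has Presburger dimension exactly $1$ is false: the antidiagonal enumeration of $\NN^2$ (order by $x+y$, break ties by $x$) is definable, has type $\omega$, and makes all of $\NN^2$ a single galaxy of dimension $2$; the ``internal rank function'' you invoke is an unbounded iterate of the successor and is not obviously definable. You only need $\dim\ge 1$ for the inequality $\dim R_{\inf}+1\le n$, so this is repairable, but as stated it is wrong. Finally, the No-Dense Lemma can be rescued precisely by the paper's counting idea rather than by quantifier elimination: in a putative dense definable $I$, the intervals $(a_i,a_{i+1})$ form infinitely many disjoint members of one definable family; if some such interval has dimension $\le n-1$ your inductive hypothesis gives it finite rank, contradicting density, and if all have dimension $n=\dim I$ one transports them through a definable bijection $I\to\NN^n$ and applies the uniform-density bound that only finitely many disjoint full-dimensional sets can occur in a definable family. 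I would either substitute that argument or adopt the paper's more direct interval construction outright.
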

	\begin{proof}
		We recapitulate the proof here in sketch, for consistency.
		
		Induction on $m$. Case $m=1$ is obvious.
		
		Let there be an $\Lc^{-}$ formula $D(\xv)$ for domain and $\prec_*(\xv,\yv)$ for interpretation of the order relation. Without loss of generality we may assume that $L= \{\av \in \mathbb{N}^m \mid (\mathbb{N},+)\models D(\av)\}$ and $\prec$ is defined by the formula $\prec_*$.
		
		Assume $\mathrm{rk}(L,\prec)>m$. There are infinitely many distinct $\simeq_m$-equivalence classes in $L$, and it is possible to form an infinite chain $\av_0\prec \av_1\prec\ldots $ of elements of $L$ such that $\av_i\not\simeq_m \av_{i+1}$, for each $i$\footnote{Or an infinitely descending chain, if first is impossible.}. We consider intervals $L_i=\{\bv\in L\mid \av_i<\bv<\av_{i+1}\}$. The set $L_i/{\simeq_{m-1}}$ is infinite and $\mathrm{rk}(L_i,\prec)>m-1$.
		
		All $L_i$ are Presburger-definable sets, and that $\dim(L_i)\ge m$, for each $i$.
		
		Consider the parametric family of subsets of $\NN^m$ given by the formula $\yv_1\prec_* \xv\prec_* \yv_2$ and pairs of parameters $\yv_1=\av_i$ and $\yv_2=\av_{i+1}$, for $i\in\mathbb{N}$, which are $L_i$'s. Thus we have infinitely many disjoint sets of the dimension $m$ in the family, which is impossible.
	\end{proof}
	
	For Visser's conjecture, this implies there is no $m$ that the non-scattered order $\NN+\ZZ\cdot\QQ$ can be $m$-dimensionally interpreted in the Standard Model. Thus, the isomorphism exists for all $m\in\NN$.
	
	However, for the problem of the complete description of linear orderings interpretable in $(\NN,+)$ this is obviously but a necessary condition: not all linear orders with $VD_*$-rank $m$ or below are $m$-interpretable (or interpretable at all). Indeed:
	
	\begin{lemma}
		Each scattered linear order of $VD_*$-rank 1 is $1$-dimensionally interpretable in $(\NN,+)$. There are scattered linear orders of $VD_*$-rank 2 that are not interpretable in $(\NN,+)$.
	\end{lemma}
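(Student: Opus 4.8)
The plan is to treat the two claims separately. For the first, I would begin by pinning down the isomorphism type of a rank-$1$ scattered order. By definition $\mathrm{rk}(L,<)=1$ means $L/{\simeq_1}$ is finite while $L$ is infinite, so $L$ is a finite concatenation $G_1+\cdots+G_k$ of its $\simeq_1$-classes (galaxies). Each galaxy is a linear order in which any two points have only finitely many points between them; a short argument, examining the successors and predecessors of a single fixed point, shows that every such order is isomorphic to one of $\omega$, $\omega^*$, $\zeta$, or a finite order. Thus, up to isomorphism, a rank-$1$ order is a finite sum of blocks, each of type among $\{n,\omega,\omega^*,\zeta\}$.

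I would then construct a $1$-dimensional interpretation by laying the $k$ blocks out along residue classes. Taking modulus $k$, I place block $i$ on $\{x\mid x\equiv i\ (\mathrm{mod}\ k)\}$, ordering a block of type $\omega$ by $<$, one of type $\omega^*$ by $>$, and one of type $\zeta$ by splitting its residue class modulo $2k$ into a reversed half and a forward half (this uses exactly the $1$-dimensional interpretation of $(\ZZ,<)$ in $(\NN,+)$ noted in the introduction); a finite block is handled by restricting the domain to finitely many points of a residue class. The interpreted order is then
$x<_* y\Llra\bigl(x\bmod k<y\bmod k\bigr)\vee\bigl(x\equiv y\ (\mathrm{mod}\ k)\wedge\theta_i(x,y)\bigr)$,
where $\theta_i$ is the appropriate within-block relation. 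Each case is quantifier-free $\Lc$-definable, so this is a genuine interpretation realizing $L$. The only delicate point is the bookkeeping needed to see that every finite combination of block types is uniformly definable, but since $k$ is fixed and finite this is routine.

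For the second claim I would run a counting argument against an explicit continuum-sized family. For $f\colon\omega\to\{1,2\}$ set $L_f\eqdef\sum_{n\in\omega}(\zeta+F_{f(n)})$, where $F_m$ denotes the finite linear order on $m$ points; that is, $L_f$ is an $\omega$-sequence of blocks in which the $n$-th $\zeta$ is followed by a finite block of size $f(n)$. Each $\zeta$-block and each finite block is a single galaxy, so $L_f/{\simeq_1}$ has order type $\omega$; hence $\mathrm{rk}(L_f)=2$ and $L_f$ is scattered. The sequence of galaxy-sizes is an isomorphism invariant: any isomorphism $L_f\cong L_g$ descends to an order isomorphism of the two galaxy-orderings, both of type $\omega$, which must be the identity on positions and must send finite galaxies to finite galaxies of equal size; therefore $f=g$. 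Consequently the $L_f$ are pairwise non-isomorphic, yielding $2^{\aleph_0}$ scattered orders of $VD_*$-rank $2$.

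Finally I would observe that an interpretation of any finite dimension is a finite tuple of $\Lc^-$-formulas, and there are only countably many such formulas, so at most countably many order types are interpretable in $(\NN,+)$. Since $\{L_f\}$ has cardinality $2^{\aleph_0}$, continuum-many of its members are not interpretable, which proves the second statement. The main obstacle is the interpretation construction in the first claim, specifically verifying that the galaxy classification is exhaustive and that the piecewise within-block order stays definable; the second claim, once the family is exhibited, reduces to the routine cardinality count.
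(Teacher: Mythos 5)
Your proposal is correct and follows essentially the same route as the paper: the paper dismisses the rank-$1$ case as obvious (your residue-class construction is the standard way to make that precise) and proves the second claim by exactly your cardinality argument, comparing the countably many interpretations with the uncountably many non-isomorphic scattered orders of $VD_*$-rank $2$. The only difference is that you supply details the paper omits, namely the explicit continuum-sized family $L_f=\sum_{n\in\omega}(\zeta+F_{f(n)})$ witnessing uncountability and the block-by-block definition of the rank-$1$ interpretation.
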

	\begin{proof}
		The interpretability of linear orders with rank $0$ and rank $1$ is obvious.
		
		Since there are uncountably many non-isomorphic scattered linear orders of $VD_*$-rank 2 and only countably many linear orders interpretable in $(\NN,+)$, there is some scattered linear order of $VD_*$-rank 2 that is not interpretable in $(\NN,+)$.
	\end{proof}
	
	We attempt to find the necessary and sufficient conditions for the linear ordering to be $m$-dimensionally interpretable in $(\NN,+)$.
	
	\section{Galaxy Definabilities}
	
	It is clear that a galaxy $[x]$ of a point $x$ in our scattered order $L$ is isomorphic to either of the following: $\ZZ$, $\NN$, $-\NN$, or a finite set.
	
	For each point $\overline{x}$ from the domain $L$ we introduce the following predicates:
		
		\begin{itemize}
			\item $N(x)$, if $[x]\simeq\NN$;			
			\item $\overline{N}(x)$, if $[x]\simeq-\NN$;			
			\item $Z(x)$, if $[x]\simeq\ZZ$;			
			\item $F(x)$ (from \emph{finite}), if $|[x]|<\infty$;			
			\item in particular, $F_n(x)$, if $|[x]|=n$.
		\end{itemize}
	
	\begin{lemma}
		The predicate $T(x,y)$ `$x$ and $y$ are in the same galaxy' is definable.
	\end{lemma}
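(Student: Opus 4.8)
The plan is to recognize that $T(x,y)$ is exactly the relation $\simeq_1$ from the rank definition: $x$ and $y$ lie in the same galaxy iff only finitely many points of $L$ separate them. Introduce the parametrized ``open interval''
\[
B(x,y)\eqdef\{\,z\mid(x\prec_* z\prec_* y)\lor(y\prec_* z\prec_* x)\,\},
\]
which is a $\PrA$-definable set depending on the parameters $x,y$ (and is empty exactly when $x$ and $y$ coincide in the internal model). Then $T(x,y)$ holds iff $B(x,y)$ is finite, so the whole task reduces to one claim: in a uniformly parametrized family of Presburger-definable sets, the set of parameters for which the fiber is finite is itself $\PrA$-definable. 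Granting this, I would set $T(x,y)\Lra\mathrm{Fin}(B(x,y))$, where $\mathrm{Fin}$ abbreviates the defining formula for finiteness of the fiber.

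To establish the finiteness claim, I would invoke quantifier elimination to replace the defining formula of $B$ by an equivalent quantifier-free $\Lc$-formula $\psi(z,x,y)$ and put $\psi$ into disjunctive normal form, obtaining finitely many cells. Each cell fixes a system of linear inequalities $Az\le b(x,y)$ (some rows possibly strict) together with congruence conditions on $z$, where, crucially, the coefficient matrix $A$ acting on $z$ is independent of the parameters: only the right-hand side $b$ and the congruence residues vary with $x,y$. The key structural observation is that, for a fixed cell, finiteness of its fiber is governed entirely by the parameter-free recession cone $\{u\ge 0\mid Au\le 0\}$. When the fiber is nonempty and this cone is nontrivial, it contains a nonzero nonnegative integer vector $v$; adding integer multiples of $Mv$ (with $M$ the least common multiple of the congruence moduli) to any fiber point yields an infinite progression inside the cell, so the fiber is infinite. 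Conversely, a trivial recession cone forces the underlying polyhedron to be bounded, hence the fiber finite.

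Consequently the entire fiber $\{z\mid\psi(z,x,y)\}$ is finite precisely when every cell whose recession cone is nontrivial is empty for the given parameters. Since emptiness of a single cell is the negation of an existential Presburger formula $\exists z\,\psi_C(z,x,y)$, hence $\PrA$-definable in $(x,y)$ by quantifier elimination, I would take
\[
\mathrm{Fin}(x,y)\eqdef\bigwedge_{C\,:\,\mathrm{rec.\ cone\ nontrivial}}\neg\,\exists z\,\psi_C(z,x,y),
\]
a finite conjunction and therefore a genuine $\Lc$-formula. This yields the definability of $T$.

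The main obstacle I anticipate is the uniform finiteness claim, and within it the careful bookkeeping of the cell decomposition: one must verify that the recession cone really is parameter-independent after quantifier elimination (so that ``which cells are dangerous'' is decided once and for all), and that the strict inequalities coming from $\prec_*$ do not disturb the boundedness criterion for integer points. A minor secondary point is the treatment of interpreted equality; working with tuple-representatives, I would note that the normalization adopted in the proof of the rank theorem lets us treat $L$ as a set of tuples, so that finiteness of the interval of representatives coincides with finiteness of the interval of $L$-points.
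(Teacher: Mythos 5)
Your argument is correct, but it takes a considerably heavier route than the paper, which disposes of the lemma in two lines. The paper's observation is that for a definable subset of $\NN^m$, finiteness is the same as boundedness, and boundedness of the parametrized interval $\{z\mid x<_*z<_*y\}$ is \emph{directly} expressible by a single first-order formula: $\exists n\,\forall \overline{z}\,\bigl((x<_*\overline{z}<_*y)\to\bigwedge_i z_i\le n\bigr)$. No quantifier elimination, cell decomposition, or recession-cone analysis is needed; the extra quantifier over the bound $n$ does all the work, and the resulting formula in $(x,y)$ is automatically a Presburger formula. Your proof instead re-derives, from scratch, the uniform-finiteness property of definable families (finiteness of fibers is a definable condition on parameters) via quantifier elimination, DNF cells, and the parameter-independence of the recession cone $\{u\ge 0\mid Au\le 0\}$; the details you give (integer ray in a nontrivial rational cone, multiplying by the lcm of the moduli to preserve congruences, strict inequalities being preserved under adding a recession direction, boundedness from a trivial cone) are all sound, so the proof goes through. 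What your version buys is generality and effectivity: it identifies exactly which cells can be infinite, works verbatim over $\ZZ^m$ or in settings where one cannot simply say ``bounded,'' and yields the finiteness formula as an explicit finite conjunction of negated existentials. What it costs is that it obscures the one-line reason the lemma is true in $\NN^m$. Your closing remark about working with tuple representatives rather than $\sim_\iota$-classes is a fair point that the paper glosses over, and is handled exactly as you say.
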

	\begin{proof}
		For the naturals finiteness is same as boundedness. The formula for `$\{z\mid x<_*z<_*y\}$ is bounded by some $n$ in all dimensions' defines the required.
	\end{proof}
	
	It is a simple exercise to show:
	
	\begin{lemma}
		Let $(L,<)$ be an $(\NN,+)$-interpretable ordering. Then the predicates $Z(x),N(x),\overline{N}(x),F_n(x)$ for all $n\in\NN_{+}$ are $\PrA$-definable.
	\end{lemma}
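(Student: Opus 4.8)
The plan is to express each predicate as a first-order combination of the interpreted order $<_*$, the interpreted equality $\sim_\iota$, and the galaxy predicate $T(x,y)$, the last of which is already known to be $\PrA$-definable. First I would introduce the auxiliary notions of the \emph{least} and the \emph{greatest} element of a galaxy,
\[
m(x)\eqdef\neg\exists y\,(T(x,y)\wedge y<_* x),\qquad M(x)\eqdef\neg\exists y\,(T(x,y)\wedge x<_* y),
\]
which assert that no point of $[x]$ lies strictly below, respectively above, $x$. Both are $\PrA$-definable since $T$ and $<_*$ are, and from them I read off the predicates ``$[x]$ has a least element'' and ``$[x]$ has a greatest element'' as $\exists y\,(T(x,y)\wedge m(y))$ and $\exists y\,(T(x,y)\wedge M(y))$.

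The second step invokes the classification recalled at the start of this section: every galaxy is isomorphic to $\ZZ$, $\NN$, $-\NN$, or a finite set, and these cases are separated exactly by which endpoints the galaxy possesses — type $\NN$ has a least but no greatest element, type $-\NN$ the reverse, type $\ZZ$ neither, and a finite galaxy both. Hence I set
\[
N(x)\ardef\exists y\,(T(x,y)\wedge m(y))\wedge\neg\exists y\,(T(x,y)\wedge M(y)),
\]
and symmetrically define $\overline{N}(x)$, $Z(x)$ and $F(x)$ as the remaining three Boolean combinations of the two definable endpoint predicates. This yields $\PrA$-definability of $N$, $\overline{N}$, $Z$ and $F$ at once.

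Finally, for each fixed $n\in\NN_+$ I would capture $F_n$ by counting the points of a finite galaxy,
\[
F_n(x)\ardef\exists y_1\ldots y_n\Big(y_1<_*\cdots<_* y_n\wedge\bigwedge_{i=1}^n T(x,y_i)\wedge\forall z\big(T(x,z)\to\bigvee_{i=1}^n z\sim_\iota y_i\big)\Big),
\]
which says that $[x]$ contains the strictly increasing points $y_1,\ldots,y_n$ and no others; the strict chain makes the witnesses pairwise $\sim_\iota$-distinct and the universal clause makes them exhaustive, so the formula holds precisely when $|[x]|=n$.

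I expect no genuine obstacle here: once $T$ is available, every predicate above is a first-order combination of the $\PrA$-definable relations $<_*$, $\sim_\iota$ and $T$. The only points deserving care are that internal equality must be read as $\sim_\iota$ rather than literal coincidence of tuples (unless the interpretation has absolute equality), and that $F_n$ is defined by a separate formula for each $n$, of size growing with $n$, rather than by a single uniform formula — which is exactly what the statement requests.
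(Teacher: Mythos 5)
Your proof is correct; the paper itself gives no argument here (it labels the lemma ``a simple exercise''), and your solution is the natural intended one: the endpoint characterization via $m(x)$, $M(x)$ built from the already-definable $T$ and $<_*$ correctly separates the four galaxy types $\NN$, $-\NN$, $\ZZ$, finite listed at the start of the section, and your per-$n$ counting formula for $F_n$ (with $\sim_\iota$ used for internal equality) is exactly what is needed. No gaps.
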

	
	\section{Condensed Ordering}
	
	Now we may show that the if an order is $m$-dimensionally interpretable, then its condensation is $(m-1)$-interpretable\footnote{Thus, the interpretability of \textit{all condensations} is a necessary condition.}.
	
	\begin{theorem}
		Let $(L,<)$ be an $m$-definable ordering, and $(cL,<')$ its condensation. Then $cL$ is $(m-1)$-definable.
	\end{theorem}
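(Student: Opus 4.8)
The plan is to realize the condensation concretely as an order on a definable set of galaxy-representatives sitting inside $\NN^m$, and then to show that this representative set has Presburger dimension at most $m-1$, so that the dimension theorem converts it into a genuine $(m-1)$-dimensional interpretation.

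First I would fix canonical representatives. Using the definable relation $T(\xv,\yv)$ and the fact that $\NN^m$ is well-ordered by the lexicographic order $\le_{\mathrm{lex}}$, define
\[ R(\xv)\ardef D(\xv)\wedge\forall\yv\,(T(\xv,\yv)\ra \xv\le_{\mathrm{lex}}\yv), \]
so that $R$ selects exactly the $\le_{\mathrm{lex}}$-least point of each galaxy. Since the $\simeq_1$-classes are convex in $<_*$, the relation $\xv<'\yv\ardef \xv<_*\yv\wedge\neg T(\xv,\yv)$ restricted to $R$ is a linear order, and the map sending a galaxy to its representative is an isomorphism of $(cL,<')$ with $(R,<')$. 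This already exhibits $cL$ as an $m$-dimensional interpretation; the whole difficulty is to drop the dimension by one.

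Next I would split $R=R_\infty\sqcup R_F$ according to whether the galaxy is infinite (types $\NN,-\NN,\ZZ$, detected by $N(\xv)\vee\overline{N}(\xv)\vee Z(\xv)$) or finite. For $R_\infty$, note that the map $\mu$ sending each point of $G_\infty\eqdef\{\xv\in L\mid N(\xv)\vee\overline{N}(\xv)\vee Z(\xv)\}$ to its galaxy-representative is definable (the galaxy minimum, maximum, or $\le_{\mathrm{lex}}$-least point, as the type dictates) and surjects onto $R_\infty$ with all fibres infinite. A dimension-theoretic fact — a definable surjection all of whose fibres are infinite lowers dimension by at least one, i.e.\ $\dim R_\infty\le\dim G_\infty-1\le m-1$ — then does the job. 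The key structural input is that between two distinct galaxies there are always infinitely many points (distinct $\simeq_1$-classes are separated by infinitely many elements). I expect verifying the fibre-dimension inequality in the semilinear setting to be the main obstacle, and I would reduce it to the cell/lattice decomposition of $\mu$ provided by the description of $\PrA$-definable functions.

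For the finite galaxies I would argue that $\dim R_F\le\dim R_\infty$. The same `infinitely many points between distinct galaxies' observation forbids two consecutive finite galaxies in $cL$: if $[a]<'[b]$ were adjacent with both finite, the infinitely many points strictly between $a$ and $b$ could not all lie in $[a]\cup[b]$. Hence each finite galaxy, with at most one boundary exception, is immediately followed in $<'$ by an infinite one, and sending a finite galaxy to the $<'$-least infinite galaxy above it is a definable injection $R_F\inj R_\infty$, forcing $\dim R_F\le\dim R_\infty\le m-1$. Combining the two cases gives $\dim R\le m-1$. Finally, applying the dimension theorem to $R$ produces a definable bijection $R\to\NN^n$ with $n\le m-1$, along which I transport $<'$ to obtain an $n$-dimensional (hence, after padding with dummy coordinates, $(m-1)$-dimensional) interpretation of $cL$.
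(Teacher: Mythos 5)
Your proposal follows essentially the same route as the paper: lexicographically minimal representatives realize $cL$ on a definable subset of $\NN^m$, the infinite galaxies force the representative set's dimension down by one, the finite galaxies inject into the infinite ones because two finite galaxies cannot be adjacent in $cL$, and the dimension theorem then converts the low-dimensional representative set into an $(m-1)$-dimensional interpretation. The only real divergence is presentational: where you invoke a general ``definable surjection with infinite fibres drops dimension'' lemma and leave it unproved, the paper argues the same point concretely by sieving the $k$-th points of the $\NN$-galaxies and counting disjoint full-dimensional subsets of $\NN^m$, so you would need to either prove your fibre lemma (e.g.\ from the lattice decomposition, as you suggest) or substitute that explicit argument.
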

	
	\begin{proof}
		It is not hard to show that $cL$ is $m$-definable, and, furthermore this interpretation can be obtained by limiting the domain of the interpretation of $L$. Indeed, we may fix a point, say, lexicographically minimal, in each galaxy, as shown by the aforementioned definabilities:
		
		\begin{center}
			$$\Dc'(x)=\Dc(x)\wedge\forall y\,\left(T(x,y)\ra\text{$x$ is lexicographically smaller than $y$}\right).$$
		\end{center}
		
		Similarly, it is possible to choose the internal model least point of galaxies isomorphic to $\NN$ and the largest point of those isomorphic to $-\NN$ (we may also define any galaxy isomorphic to $\ZZ$ as two subsequent galaxies isomorphic to $-\NN$ and $\NN$ respectively, splitting them, for example, at the lexicographically minimal point again). This will be more convenient to us.
		
		A less obvious matter is why the dimension actually decreases.
	
		There is an infinite number of galaxies isomorphic to infinite sets (otherwise $cL$ would be finite, and everything is trivial). Additionally, any finite galaxy can only be neighbouring to an infinite one. There are two logical possiblitities: first, we deal with the case when the representative points from infinite galaxies form an $n$-dimensional set. We assume there is an infinite number of galaxies isomorphic to $\NN$.
		
		It suffices to consider the galaxies that are isomorphic to $\NN$. Assume their starting points are forming a wholly $n$-dimensional set; each of galaxies themselves, however, takes at least one dimension. If we take their $k$th points for each particular $k$ (also definable sets), an infinite number of those are $n$-dimensional. The clearest way to see that is to consider the (definable) function of successor and take an $n$-dimensional subset of those first points of the galaxies on which this function is linear. Then the set of second points derived from those will be a linear transformation of the previous one. Continuing this sieving of points to infinity may eliminate all the points; however, it suffices to repeat it only $m^n+1$ times, where $m$ is the least common divisor of all the modulo comparisons in the definition of $L$. We see that there are $m^n+1$ non-intersecting $n$ dimensional sets in $m^n$, which is impossible.
		
		Now we consider the second possibility: the representative points from infinite galaxies are less that $n$-dimensional but the points from finite ones are. This, however,is impossible: consider the definable function mapping each representative point from a finite galaxy to some representative point of an infinite galaxy after it but before the previous finite point, if such infinite point exists; alternatively, this is an inverse of a function that maps some of the infinite points to preceding finite points. Now it is easy to see that this function, when considered in the direction from the (less than $n$-dimensional, ever more so as some of the infinite points are skipped) infinite points to the ($n$-dimensional) finite points (no more than one skipped, as between each two finite points there is an infinite one) is a piecewise linear function, but it increases the dimension.
	\end{proof}
	
	Now we apply the result to give a complete classification of linear orderings interpretable in no more than two dimensions.
	
	We recall the following result, derived from \cite{blakley}, but with a boundary on on the asymptotics of the function established in (\cite[Theorem 6.2]{zp}):
	
	\begin{theorem}\label{bash}
		Let $A$ be a $d\times n$ matrix of integer numbers, function $\varphi_A\colon\mathbb{Z}^d\ra \NN\cup \{\aleph_0\}$ is defined as follows:
		
		\begin{center}
			$\varphi_A(u)\eqdef|\{\overline{\lambda}=(\lambda_1,\ldots,\lambda_n)\in\NN^n\mid A\lambda=u\}|.$
		\end{center}
		
		Then if the values of $\varphi_A$ are always finite, the function $\varphi_A$ is a piecewise polynomial function of a degree $\le n-\mathrm{rk}(A)$.
	\end{theorem}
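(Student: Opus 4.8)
The plan is to induct on the number of columns $n$, peeling off one generator at a time and tracking how the rank and the degree move together. Writing $A=[A'\mid p_n]$ with $A'$ the first $n-1$ columns and $p_n$ the last, every solution $\lambda\in\NN^n$ of $A\lambda=u$ arises by fixing $\lambda_n=k\in\NN$ and solving $A'\lambda'=u-kp_n$, so that
$$\varphi_A(u)=\sum_{k\ge 0}\varphi_{A'}(u-kp_n).$$
The hypothesis that $\varphi_A$ is everywhere finite is equivalent to $\ker A\cap\NN^n=\{0\}$; appending a zero coordinate shows the same holds for $A'$, so the induction hypothesis applies and $\varphi_{A'}$ is piecewise polynomial of degree $\le (n-1)-\mathrm{rk}(A')$. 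The base case ($n=0$, or $A$ of rank $0$) is immediate.

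The inductive step splits according to whether $p_n$ lies in the rational column span of $A'$. If it does not, then $\mathrm{rk}(A')=\mathrm{rk}(A)-1$, and for fixed $u$ the parallel affine spaces $\mathrm{span}(A')+kp_n$ are pairwise distinct, so at most one value $k=k_0(u)$ — a piecewise-linear function of $u$ — can make $u-kp_n\in\mathrm{span}(A')$ and hence contribute a nonzero summand. Thus $\varphi_A(u)=\varphi_{A'}(u-k_0(u)p_n)$ on the (Presburger-definable) region where $k_0(u)\in\NN$, and $0$ elsewhere; composing a piecewise polynomial with an integer-affine map preserves both the piecewise structure and the degree, giving the bound $(n-1)-\mathrm{rk}(A')=n-\mathrm{rk}(A)$. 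If instead $p_n\in\mathrm{span}(A')$, then $\mathrm{rk}(A')=\mathrm{rk}(A)$, finiteness forces the sum to run over a bounded range $0\le k\le K(u)$ with $K(u)$ piecewise linear in $u$ (it is the height of the bounded polytope $P_u=\{\lambda\ge 0\mid A\lambda=u\}$ in the $\lambda_n$-direction), and summing a degree-$e$ polynomial over an interval of length $\sim K(u)$ raises the degree by exactly one, yielding $\big((n-1)-\mathrm{rk}(A')\big)+1=n-\mathrm{rk}(A)$.

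Conceptually this agrees with the polytope picture: $\varphi_A(u)$ counts the lattice points of $P_u$, which lives in the affine solution space of dimension $n-\mathrm{rk}(A)$, so by Ehrhart-type behaviour on each chamber its cardinality is a piecewise polynomial whose degree equals $\dim P_u\le n-\mathrm{rk}(A)$. The two rank cases above are just the algebraic shadow of this: a genuine direction in the fiber polytope contributes one to the degree, a collapsed one contributes nothing.

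I expect the main obstacle to be the second case, where I must sum $\varphi_{A'}(u-kp_n)$ over $0\le k\le K(u)$ while the argument migrates between the fundamental lattices on which $\varphi_{A'}$ is a single polynomial. Handling this requires partitioning the range of $k$ both by residue class and by which lattice $u-kp_n$ falls into — each such set is an arithmetic progression intersected with an interval, hence Presburger-definable — and then invoking the Faulhaber-type fact that $\sum_k q(u-kp_n)$ over such a set is again piecewise polynomial in $u$ with degree raised by one. Verifying that the moving break-points remain piecewise-linear in $u$ and that the congruence bookkeeping reassembles into a bona fide fundamental-lattice decomposition is the fiddly heart of the argument; once it is in place, the degree accounting is immediate from the case split above.
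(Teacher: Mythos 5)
The paper does not prove this statement at all: it is imported as a black box, ``derived from \cite{blakley}, but with a boundary on the asymptotics of the function established in \cite[Theorem 6.2]{zp}''. So there is no in-paper argument to compare against, and your proposal should be judged on its own; on that basis it is essentially sound. Your column-peeling induction via $\varphi_A(u)=\sum_{k\ge0}\varphi_{A'}(u-kp_n)$, with the case split on whether $p_n$ lies in the column span of $A'$, is a correct and self-contained elementary route to the result (it is in effect the recursive counterpart of Blakley's generating-function argument $\prod_i(1-x^{p_i})^{-1}$), and the degree bookkeeping $(n-1)-\mathrm{rk}(A')$ versus $n-\mathrm{rk}(A)$ is right in both cases. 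Two points deserve explicit justification rather than assertion. First, in the second case you need $K(u)$ to be piecewise linear \emph{uniformly} in $u$, not merely that each fiber is finite; this follows because $\ker A\cap\NN^n=\{0\}$ forces $\ker A\cap\RR^n_{\ge0}=\{0\}$ (the recession cone is rational polyhedral, so a nonzero real point would yield a nonzero integer point), whence $P_u$ is a bounded polytope and parametric LP duality makes $\max\{\lambda_n:\lambda\in P_u\}$ a minimum of finitely many rational linear functionals of $u$. Second, the valid $k$ in case 2 need not form an interval, only a finite set contained in $[0,K(u)]$; that is harmless since the missing summands vanish, but say so. The ``fiddly heart'' you flag --- that $\{k: u-kp_n\in C_i\}$ for a fundamental lattice $C_i$ is an arithmetic progression intersected with an interval whose endpoints are piecewise linear in $u$, followed by Faulhaber summation --- is exactly the right decomposition and does go through, so with those details written out your argument would stand as a genuine proof where the paper offers only a citation.
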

	
	Consider an ordering $(L,<)$ definable in $\NN^2$. It has rank no more than $2$, and the descriptions of ranks $0$ and $1$ are established.
	
	Assume it has the whole dimension $2$. We already know that its condensation is an ordering fo rank $1$, that is, forms a finite sequence of galaxies. It suffices to consider the case with the condensed ordering $L'=\NN$. As we already know, $L'$ is defined on a one-dimensional set; this means there is a bijection of this defined ordering onto the naturals (the $x$-axis). Furthermore, this mapping can be done in the correct ascending order due to the main result of \cite{zp} that all one-dimensional interpretations of $\PrA$ are isomorphic to trivial. This means:
	
	\begin{lemma}
		The subsets of $\NN$ corresponding to the infinite, finite galaxies and all subtypes thereof (isomorphic to $\NN,-\NN,\ZZ$, having cardinality $k$ for each $k\in\NN$) are definable.
	\end{lemma}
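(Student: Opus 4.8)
The plan is to transport the galaxy-type predicates, already shown to be $\PrA$-definable on $L$, across the definable order-isomorphism that identifies the condensation with $\NN$. First I would collect the two ingredients already at hand. By the Galaxy Definabilities lemma, the predicates $N(x)$, $\overline{N}(x)$, $Z(x)$, $F(x)$, and $F_n(x)$ for each $n$ are $\PrA$-definable on $L$. By the proof of the condensation theorem, the condensation $L'$ is realized on a definable one-dimensional set $R$ of galaxy representatives (the formula $\Dc'$, selecting, say, the lexicographically least point of each galaxy), and by the discussion preceding the present lemma there is a $\PrA$-definable order-isomorphism $\beta\colon(R,<_*)\to(\NN,<)$.

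The key point is that galaxy type is a galaxy invariant: all points sharing a galaxy satisfy exactly the same one of $N$, $\overline{N}$, $Z$, $F_n$. Hence for a representative $x\in R$ the value of each such predicate records the isomorphism type of the entire galaxy $[x]$, and the natural number naming that galaxy is precisely $\beta(x)$. Therefore, for each type predicate $P\in\{N,\overline{N},Z,F,F_n\}$, I would define the corresponding subset of $\NN$ by
\[
\{\,n\in\NN \mid \exists x\,(R(x)\wedge \beta(x)=n\wedge P(x))\,\}.
\]
Each of these is $\PrA$-definable, being given by a first-order formula assembled from the already-definable formulas for $R$, for the graph of $\beta$, and for $P$. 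The subsets corresponding to ``infinite'' and ``finite'' are then the Boolean combinations $N\vee\overline{N}\vee Z$ and its complement (equivalently $F$), which are likewise definable.

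Since every step merely composes previously established definable objects, I expect no real obstacle of substance: the proof is a routine transport of definability along a definable bijection. The only point requiring care is bookkeeping, namely that one must use the same representative set $R$ (the same $\Dc'$) both to define $\beta$ and to read off the type predicates, so that the type evaluated at a representative genuinely labels the galaxy that the corresponding natural number names. Granting this consistency, all the listed subsets of $\NN$ are $\PrA$-definable.
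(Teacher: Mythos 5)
Your proposal is correct and follows essentially the same route as the paper, whose entire proof is the remark that the type predicates ``are definable on $L'$'' --- i.e.\ precisely your transport of the galaxy-type predicates $N,\overline{N},Z,F,F_n$ along the definable order-isomorphism between the one-dimensional representative set and $(\NN,<)$. You have merely written out explicitly the bookkeeping the paper leaves implicit.
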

`	
	Indeed, they are definable on $L'$.
	
	Now let us consider $D\subseteq\NN$ corresponding to the finite galaxies. As \sref{Theorem}{bash} shows, the function $f\colon D\ra\NN$, $f\colon k\mapsto|\text{galaxy corresponding to $k$}|$ is piecewise polynomial (in fact, no more than quadratic).
	
	\begin{lemma}
		If an ordering $L$ of rank $2$ is $2$-definable, $L'$ equals to $\NN$, the subsets of $\NN$ corresponding to each type of infinite galaxies are definable, and the function $f\colon k\mapsto|\text{galaxy corresponding to $k$}|$ is definable, then $L$ is definable as a limiting of the lexicographic ordering on $\ZZ^m$ for some $m$ onto some definable set.
	\end{lemma}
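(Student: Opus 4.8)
The plan is to reconstruct $L$ explicitly from the data we are given: the ordering $L'=\NN$ on the finite/infinite galaxy indices, the definable subsets of $\NN$ classifying galaxy types, and the definable size function $f$. Concretely, I want to exhibit a definable embedding of $L$ into $\ZZ^m$ (with $m=2$ suffices here, using the convention from the condensation proof that a $\ZZ$-galaxy splits into an $-\NN$- and an $\NN$-galaxy) such that the induced lexicographic order agrees with $<$, and then restrict the lexicographic order on $\ZZ^m$ to the image. The map should send a point $\bar{x}\in L$ to the pair $(k,j)$, where $k$ is the $L'$-index of the galaxy $[\bar{x}]$ and $j$ is the position of $\bar{x}$ inside that galaxy, counted from the distinguished endpoint selected in $\sref{}{}$ the condensation argument (least point for $\NN$-galaxies, greatest for $-\NN$-galaxies, finite galaxies counted from their least point).

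First I would make the index coordinate explicit: by the preceding remarks, $L'$ is isomorphic, via a $\PrA$-definable order-isomorphism, to the $x$-axis $\NN$, so the function $\bar{x}\mapsto k([\bar{x}])$ sending a point to the naturals-index of its galaxy is definable and order-preserving on galaxies. Second, I would define the intra-galaxy coordinate $j(\bar{x})$: within each galaxy the successor function is $\PrA$-definable, and the distinguished endpoint is definable, so $j(\bar{x})$ is the (definable) number of successor steps from the endpoint to $\bar{x}$; for finite galaxies this ranges over $\{0,\ldots,f(k)-1\}$, for $\NN$-galaxies over all of $\NN$, and for $-\NN$-galaxies I would count downward so that the lexicographic convention reverses correctly. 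Third, the image set $S\subseteq\NN^2$ is $\{(k,j)\mid 0\le j<f(k)\text{ if $k$ indexes a finite galaxy},\ j\in\NN\text{ if $\NN$-type},\ \ldots\}$; because $f$ is piecewise polynomial (in fact at most quadratic) and the galaxy-type subsets of $\NN$ are definable, the predicate $j<f(k)$ is $\PrA$-definable, hence $S$ is a definable subset of $\NN^2\subseteq\ZZ^2$, and $(k_1,j_1)<_{\mathrm{lex}}(k_2,j_2)$ restricted to $S$ reproduces $<$ on $L$ by construction.

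The main obstacle I expect is verifying that the lexicographic order on the constructed coordinates genuinely matches $<_*$ across galaxy boundaries and for the various galaxy types simultaneously, rather than only within a single galaxy. The delicate cases are the $-\NN$-galaxies, where the internal order runs opposite to the natural counting direction, so I must choose the sign of the $j$-coordinate (or embed into $\ZZ$ rather than $\NN$ in the second coordinate) so that lexicographic comparison yields the right direction; this is exactly why the statement is phrased over $\ZZ^m$. A secondary point to check is that the definability of $j<f(k)$ relies on $f$ being not merely a function but a $\PrA$-definable one whose graph is a finite union of fundamental lattices — which is guaranteed by $\sref{Theorem}{bash}$ together with the definability of $f$ assumed in the hypothesis — so that $S$ is semilinear and the whole interpretation stays within $\PrA$. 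Once $S$ and the coordinate maps are shown definable and the order-agreement is checked case by case, the lemma follows, since $L$ is then literally the restriction of $<_{\mathrm{lex}}$ on $\ZZ^2$ to the definable set $S$.
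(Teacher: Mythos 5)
There is a genuine gap, and it sits exactly at the point the lemma is really about. You claim that $m=2$ suffices and that the predicate $j<f(k)$ is $\PrA$-definable ``because $f$ is piecewise polynomial (in fact at most quadratic)''. This is false precisely in the quadratic case: \sref{Theorem}{bash} only gives that $f$ is piecewise polynomial of degree $\le 2$, and a genuinely quadratic function is \emph{not} a $\PrA$-definable function --- the paper itself records that $\PrA$-definable functions are exactly the piecewise \emph{linear} ones, and $\PrA$-definable subsets of $\NN^2$ are exactly the semilinear sets. The region $\{(k,j)\mid j<(k+1)^2\}$ under a parabola is not semilinear, so your set $S$ is not definable and your two-dimensional embedding breaks down for finite galaxies whose cardinalities grow quadratically in the galaxy index. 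Your appeal to ``the definability of $f$ assumed in the hypothesis'' to conclude that the graph of $f$ is a finite union of fundamental lattices cannot be right either, since on that reading the lemma would silently exclude the quadratic case that actually occurs for $2$-dimensional interpretations and that the subsequent theorem needs.

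The paper's proof agrees with yours on everything else (first coordinate indexes the galaxy along $L'=\NN$, further coordinates realize the galaxy, infinite galaxies of type $\NN$, $-\NN$, $\ZZ$ handled by rays/lines in the second coordinate), but it resolves the quadratic case by \emph{adding a dimension} rather than by trying to define $j<f(k)$: a galaxy of cardinality $(k+1)^2$ is realized as the growing square $\{(k,n_1,n_2)\mid n_1,n_2\in[0,k]\}$, which is semilinear in $\NN^3$ and has the right cardinality and order type under the lexicographic ordering. This is the whole reason the statement is phrased over $\ZZ^m$ for some $m$ rather than $\ZZ^2$. To repair your argument you should drop the claim that $j<f(k)$ is definable, decompose $D$ into pieces on which $f$ is a polynomial of degree $0$, $1$ or $2$, keep your construction on the linear pieces, and replace the second coordinate by a pair of coordinates ranging over a definable box of the appropriate (linearly growing) side lengths on the quadratic pieces.
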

	\begin{proof}
		We start with two dimensions, adding more as we need. First, $\{(n,0)\mid n\in\NN\}$ defines $L'$. The cases of infinite galaxies are dealt with easily: if, say, galaxy $0$ is isomorphic to $\NN$,then we need to add $\{(0,n)\mid n\in\NN\}$, similarly for $-\NN$ and $\ZZ$. Finally, consider the finite galaxies. The function of cardinalities is piecewise polynomial; if, say, it is linear then we may just add
		
		\begin{center}
			$\left\{(k,n)\mid n\in[0,|\text{galaxy corresponding to $k$}|]\right\}$;
		\end{center}
		
		for greater cardinalities adding more than one dimension is necessary, but still possible; for example, the function $x\mapsto x^2$ can be represented with the growing squares 
		
		\begin{center}
			$\left\{(k,n_1,n_2)\mid n_1,n_2\in[0,k]\right\}$.
		\end{center}
	\end{proof}
	
	Noting that the case of more complicated orderings of rank $1$ for $L'$ that $\NN$ it is still possible to maintain the description with adding more dimensions, we finally establish:
	
	\begin{theorem}
		Each $2$-definable ordering is definable as a limiting of the lexicographic ordering on $\ZZ^m$ for some $m$ onto some definable set.
	\end{theorem}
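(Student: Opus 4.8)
The plan is to reduce an arbitrary $2$-definable order to the base case $L'=\NN$ already settled, by splitting its condensation into finitely many galaxies and applying the preceding construction to each one separately. First I would dispose of low rank. By \sref{Theorem}{basordeing} a $2$-definable order $(L,<_*)$ has $VD_*$-rank at most $2$. Orders of rank $0$ are finite, and orders of rank $1$ are finite sums of galaxies, each of type $\NN$, $-\NN$, $\ZZ$, or finite; both embed into the lexicographic order on $\ZZ^2$ by letting the first coordinate enumerate the finitely many galaxies and the second record position within a galaxy, a map that is plainly definable. The only substantial case is therefore $\mathrm{rk}(L,<_*)=2$, where the theorem on condensation yields that $cL=L'$ is $1$-definable, hence of rank $1$.

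Next I would read off the structure of $L'$. Being of rank $1$, it is a finite sum $B_1+\cdots+B_k$ of its own galaxies, each of order type $\NN$, $-\NN$, $\ZZ$, or finite; since there are finitely many of them and the galaxy predicate $T$ is definable on the $1$-definable order $L'$, every block $B_j$ is a definable subset. On each block the analysis behind the base case survives unchanged: the subsets corresponding to the infinite galaxy subtypes are definable, and the cardinality function $f\colon k\mapsto|\text{galaxy of }k|$ on the finite galaxies of $B_j$ is piecewise polynomial by \sref{Theorem}{bash}.

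Then I would run the construction of the preceding Lemma one block at a time. A block of type $\NN$ is handled by that Lemma directly, realizing the corresponding fragment of $L$ as a restriction of the lexicographic order on some $\ZZ^{m_j}$, with the growing-region device encoding the polynomial cardinalities in the added coordinates; a block of type $-\NN$ is the order reverse and is treated with reversed coordinates; a block of type $\ZZ$ is split at a definable point into a $-\NN$ part followed by an $\NN$ part, exactly as in the proof of condensation. Putting $m=1+\max_j m_j$, I would pad each image with zeros and prepend one leading coordinate equal to the block index $j$. The lexicographic order on $\ZZ^m$ restricted to the union of these definable images then reproduces $(L,<_*)$: the leading coordinate orders the blocks, and within a block the remaining coordinates reproduce its fragment.

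The hard part will not be any individual block but the bookkeeping that makes the assembly uniform and definable. Different blocks may force different local dimensions $m_j$ and different piecewise-polynomial cardinality laws, so I must fix one ambient $\ZZ^m$, a single definable indexing of the blocks, and a padding convention under which lexicographic comparison across a block boundary comes out right, so that the largest element in the image of $B_j$ precedes the smallest in the image of $B_{j+1}$. Checking that the assembled set is still $\PrA$-definable and that the embedding identifies no two distinct points of $L$ is where the care is needed, although each ingredient is already supplied by the galaxy definabilities and by \sref{Theorem}{bash}.
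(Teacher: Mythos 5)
Your proposal follows essentially the same route as the paper: the paper disposes of this theorem with a single remark that a general rank-$1$ condensation $L'$ (rather than $L'=\NN$) is handled by the preceding Lemma together with ``adding more dimensions,'' which is exactly the block-by-block decomposition of $L'$ into its finitely many galaxies and the leading-coordinate assembly that you spell out. Your write-up is in fact more detailed than the paper's own one-sentence justification, supplying the padding and indexing bookkeeping that the paper leaves implicit.
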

	
	The obvious generalization would be to extend the description to all dimensions beyond $2$, and the following work will be dedicated to that.
	
	\section*{Acknowledgements}
	
	The author thanks Fedor Pakhomov for many ideas and discussions and Lev Beklemishev for interest and attention to the project.

\end{document}